\newcommand{\RR}{\mathbb{R}}
\newcommand{\KK}{\mathbf{K}}
\newcommand{\THETABODY}{\textup{TH}}
\newtheorem{theorem}{Theorem}[section]
\newtheorem{lemma}[theorem]{Lemma}
\theoremstyle{definition}
\theoremstyle{remark}
\theoremstyle{corollary}
\newtheorem{corollary}[theorem]{Corollary}
\theoremstyle{proposition}
\newtheorem{proposition}[theorem]{Proposition}
\title{A Semidefinite Approach to the $K_i$ Cover Problem}
\author{Jo{\~a}o Gouveia}
\address{Jo{\~a}o Gouveia, CMUC, Department of Mathematics,
  University of Coimbra, 3001-454 Coimbra, Portugal}
\email{jgouveia@mat.uc.pt} 
\author{James Pfeiffer}
\address{James Pfeiffer, Department of Mathematics,
University of Washington, Seattle, WA 98195}
\email{jamesrpfeiffer@gmail.com}
\thanks{The authors were partially supported on this project 
as follows: JG by `Centro de Matem\'{a}tica da Universidade de Coimbra' and `Funda\c{c}\~{a}o para a Ci\^{e}ncia e a Tecnologia', through European program COMPETE/FEDER; and JP by NSF grant DMS-1115293}
\begin{document}

\maketitle

\begin{abstract}
We apply theta body relaxations to the $K_i$-cover problem and show
polynomial time solvability for certain classes of graphs. In particular, we
give an effective relaxation where all $K_i$-$p$-hole facets are valid,
and study its relation to an open question of Conforti et al. For the triangle free problem, we show for $K_n$ that the theta body relaxations do not converge by $(n-2)/4$ steps; we also prove for all $G$ an integrality gap of 2 for the second theta body.
\end{abstract}

\section{Introduction}
A common way to model a combinatorial optimization problem is as the optimization of a function over the set $S \subseteq \{0,1\}^n$ of  characteristic vectors of the objects in question. When the objective function is linear, 
we may replace $S$ by its convex hull $\hbox{conv}(S)$. The problem can be solved efficiently if we can find a small description of this polytope. Since for NP hard problems we cannot expect this, we look instead for  approximations to $\hbox{conv}(S)$. One possibility is to use semidefinite approximations, as introduced by Lov\'{a}sz \cite{lovasz} with the construction of the {\em theta body} of the stable set polytope of a graph. Another famous example is the approximation algorithm for the max cut problem due to Goemans and Williamson \cite{goemans_williamson}. In this paper we will use the semidefinite relaxations introduced by Gouveia, Parrilo and Thomas \cite{gpt} to analyze the {\em $K_i$-cover problem}.

Recall that $K_i$ denotes the complete graph, or clique, on $i$ vertices. Given a graph $G$, let $\KK_j(G)$ be the collection of cliques in $G$ of size $j$ (usually, the graph is clear from context, and we write $\KK_j$). A (possibly empty) collection $C \subset \KK_{i-1}$ is said to be a $K_i$-cover if for each $K \in \KK_i$, there is some $H \in C$ with $H \subset K$. In this case we say that $H$ covers $K$. The $K_i$-cover problem is, given a graph $G$ and a set of weights on $\KK_{i-1}$, to compute the minimum weight $K_i$-cover. The case $i=2$ is more commonly known as the vertex cover problem, in which we seek a collection $C$ of vertices such that each edge in $G$ contains at least one vertex from $C$. However, note that the usage of ``cover'' is reversed here: the vertex cover problem is the $K_2$-cover problem, not the $K_1$-cover problem.

A closely related problem, and the setting in which we will prove our results, is the {\em $K_i$-free problem}. As before, we are given a graph and a collection of weights on $\KK_{i-1}$. But now we seek the maximum weight collection $C \subseteq \KK_{i-1}$ such that $C$ is $K_i$-free. That is, for each $K \in \KK_i$, there is some $H \in \KK_{i-1}$, with $H \subset K$ and $H \notin C$. Again, the case $i=2$ of this problem is well-known as the stable set problem: we seek a maximum weight {\em stable set} $C$, where $C$ is stable if no two of its vertices are connected by an edge.

The vertex cover and stable set problems are related in the following sense: let $G = (V,E)$ be a graph. Then a subset $C$ of vertices is a vertex cover if and only if $V \setminus C$ is a stable set. The same is true for the $K_i$-cover and $K_i$-free problems: a subset $C \subset \KK_{i-1}$ is a $K_i$-cover if and only if $\KK_{i-1} \setminus C$ is $K_i$-free. Therefore, for a given set of weights on $\KK_{i-1}$, optimal solutions to the two problems are complementary, and so solving one solves the other.

In this paper, we consider the polytope associated with the $K_i$-free problem. Let 
$P_i(G) = \hbox{conv}(\{\chi_S: S \subset \KK_{i-1}(G) \text{ and $S$ is $K_i$-free}\})$, 
the convex hull of the incidence vectors of the $K_i$-free sets. Note that $P_i(G) \subseteq [0,1]^{\KK_{i-1}(G)}$.

As the $K_i$-free problem is NP-complete (see \cite{conforti}), we cannot expect a small description of $P_i(G)$ for general graphs $G$. However, for certain classes of facets of $P_i(G)$, Conforti, Corneil, and Mahjoub \cite{conforti} show that we can solve the separation problem in polynomial time, allowing us to optimize efficiently over a relaxation of $P_i(G)$. We provide a strictly tighter relaxation of $P_i(G)$, improving their optimization result, but without proving the existence of polynomial separation oracles for any new family of facets.

The structure of this paper is: in Section 2, we outline the main algebraic
machinery, {\em theta bodies}, a semidefinite relaxation hierarchy. In Section 3
we show that the $K_i$-$p$-hole facets are valid on $\lceil i/2 \rceil$ level of the theta body hierarchy. Finally,
in Section 4 we focus on the triangle free problem. We show that in the case of
$G = K_n$, the theta body relaxations cannot converge in less than $(n-2)/4$ steps.
We also use a result of Krivelevich \cite{krivelevich}  to show an integrality gap of 2 for the second theta body.

\subsection*{Acknowledgements}
The authors would like to thank the anonymous referees for their helpful comments.
The authors were partially supported on this project
as follows: JG by `Centro de Matem\'{a}tica da Universidade de Coimbra' and `Funda\c{c}\~{a}o para a Ci\^{e}ncia e a Tecnologia', through European program COMPETE/FEDER; and JP by NSF grant DMS-1115293.

\section{Theta bodies}
Theta bodies are semidefinite approximations to the convex hull of an algebraic variety. For background, see \cite{frg} and \cite{gpt}. Here we state the necessary results for this paper without proofs.

Let $V \subseteq \RR^n$ be a finite point set. One description of the convex hull of $V$ is as the intersection of all affine half spaces containing $V$ (recall that $f|_V$ is the restriction of $f$ to $V$):
$$\hbox{conv}(V) = \{x \in \RR^n: f(x) \ge 0 \hbox{ for all linear $f$ such that } f|_V \ge 0\}.$$
Since it is computationally intractable to find whether $f|_V \ge 0$, we relax this condition. Let $I$ be the vanishing ideal of $V$, i.e., the set of all polynomials vanishing on $V$. Recall that $f \equiv g \mod I$ means $f - g \in I$, and implies that $f(x) = g(x)$ for all $x \in V$. A function $f$ is said to be a sum of squares of degree at most $k$ mod $I$, or {\em $k$-sos mod $I$}, if there exist functions $g_j$, $j=1,\ldots,m$ with degree at most $k$, such that $f \equiv \sum_{j=1}^mg_j^2$ mod $I$. If $f$ is $k$-sos mod $I$ for any $k$, it is clear that $f|_V \ge 0$ since $g_j^2$ is visibly nonnegative on $V$. Therefore, we make the following definition of $\THETABODY_k(I)$, the $k$-th theta body of $I$:
$$\THETABODY_k(I) = \{x \in \RR^n: f(x) \ge 0 \hbox{ for all linear $f \equiv$ $k$-sos mod $I$}\}.$$
The reason why the theta bodies $\THETABODY_k(I)$ provide a computationally tractable relaxation of $\hbox{conv}(V)$ is that the membership problem for $\THETABODY_k(I)$ can be expressed as a semidefinite program, using {\em moment matrices} that are reduced mod $I$. 

For what follows, we will restrict ourselves to a special class of varieties, and suppose that our variety $V \subseteq \{0,1\}^n$ and is {\em lower-comprehensive}; i.e., if $x\le y$ componentwise, and $y \in V$, then $x \in V$. Additionally, we will always assume that $V$ contains the canonical
basis of $\RR^n$, $\{e_1, \cdots, e_n\}$, as otherwise we could restrict ourselves to a subspace. All combinatorial optimization problems of avoiding certain finite list of configurations, such as stable set, $K_i$-free, etc., have lower-comprehensive varieties. The restriction to this class is not necessary, but makes the theta body exposition simpler. In particular, the ideal of a lower-comprehensive variety has the following simple description.

\begin{lemma} \label{ideal}
Let $V$ be a lower-comprehensive subset of $\{0,1\}^n$. Then its vanishing ideal is given by
$$I=\langle x_j^2 - x_j: j = 1, \ldots, n;
x^S: S \notin V\rangle,$$ and a basis for $\RR[V] = \RR[x]/I$ is given by $B = \{x^S: S \in V\},$
where $x^S := \prod_{i \in S} x_i$ is a shorthand used throughout the paper.
\end{lemma}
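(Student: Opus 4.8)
The plan is to establish the two claims together, built around the chain $J \subseteq I$, where
$$J := \langle x_j^2 - x_j : j = 1,\ldots,n;\ x^S : S \notin V\rangle$$
is the ideal in the statement, and to close the inclusion to an equality by a dimension count. First I would verify $J \subseteq I$ by checking that each listed generator vanishes on $V$: the binomials $x_j^2 - x_j$ vanish at every $0/1$ point, and for $S \notin V$ the monomial $x^S$ evaluated at a point of $V$, written as $\chi_T$ with $T \in V$, equals $\prod_{i \in S}(\chi_T)_i$, which is $1$ if $S \subseteq T$ and $0$ otherwise; since $S \subseteq T \in V$ would force $S \in V$ by lower-comprehensiveness, $x^S$ indeed vanishes on $V$.

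Next I would show that $B = \{x^S : S \in V\}$ spans $\RR[x]/J$: reducing modulo the generators $x_j^2 - x_j$ turns any monomial into a squarefree monomial $x^S$ with $S \subseteq \{1,\ldots,n\}$, and then reducing modulo the generators $x^S$ with $S \notin V$ leaves only the classes of $x^S$ with $S \in V$. In particular $B$ spans $\RR[x]/J$, and since $J \subseteq I$ it also spans $\RR[x]/I$, so $\dim \RR[x]/J \le |V|$ and $\dim \RR[x]/I \le |V|$.

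For linear independence in $\RR[x]/I$, I would suppose $\sum_{S \in V} c_S x^S \in I$, i.e.\ that this polynomial vanishes on $V$, and evaluate at $\chi_T$ for each $T \in V$. Using lower-comprehensiveness (so that every $S \subseteq T$ already lies in $V$ when $T \in V$) this gives $\sum_{S \subseteq T} c_S = 0$ for all $T \in V$. This is a unitriangular system with respect to inclusion: induction on $|T|$, with base case $T = \emptyset$ giving $c_\emptyset = 0$, yields $c_T = -\sum_{S \subsetneq T} c_S = 0$ throughout (equivalently, this is Möbius inversion on the subset lattice). Hence $B$ is independent in $\RR[x]/I$, so $\dim \RR[x]/I = |V|$; one may alternatively invoke the standard fact that for a finite point set the evaluation map identifies $\RR[x]/I$ with the algebra of all real functions on $V$. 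Finally, combining the bounds with the surjection $\RR[x]/J \twoheadrightarrow \RR[x]/I$ coming from $J \subseteq I$ gives $|V| \ge \dim \RR[x]/J \ge \dim \RR[x]/I = |V|$, so this surjection of finite-dimensional spaces is an isomorphism and $J = I$; together with the previous steps, $B$ is then a basis of $\RR[x]/I = \RR[V]$.

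The argument is short, and the only genuinely load-bearing point is where lower-comprehensiveness enters the independence step: it is exactly what makes $\{S \subseteq T : S \in V\}$ an order ideal for each $T \in V$, so that the evaluation system is triangular and the induction closes. The conceptual (as opposed to computational) crux is then the final squeeze that upgrades $J \subseteq I$ to $J = I$; everything else is bookkeeping.
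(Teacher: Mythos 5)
Your proof is correct. Note that the paper itself gives no proof of this lemma: Section~2 explicitly states its background results without proofs, deferring to the references on theta bodies, so there is no in-paper argument to compare against. Your argument is the standard one and all the steps check out: the containment $J \subseteq I$ uses lower-comprehensiveness exactly where it must (a monomial $x^S$ with $S \notin V$ cannot be supported inside any $T \in V$); the spanning claim for $B$ in $\RR[x]/J$ is immediate from squarefree reduction; the independence claim follows from the triangular evaluation system $\sum_{S \subseteq T} c_S = 0$ (again lower-comprehensiveness is what makes every $S \subseteq T$ lie in $V$, so the system really is unitriangular over $V$); and the dimension squeeze upgrades $J \subseteq I$ to equality. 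One pedantic point you implicitly rely on: the base case $T = \emptyset$ of the induction requires $\emptyset \in V$, which holds because $V$ is nonempty (the paper's standing assumption that $V$ contains $e_1,\ldots,e_n$, plus lower-comprehensiveness, guarantees this); it would be worth one clause to say so. Otherwise the proof is complete, and the closing squeeze is indeed the only non-bookkeeping step.
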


Another important fact about $\THETABODY_k(I)$ in this setting (when $I$ is a real ideal) is that a linear inequality $f(x) \geq 0$ is valid on $\THETABODY_k(I)$ if and only if $f$ is actually $k$-sos modulo $I$.
In Section 3, we will prove that certain facet-defining inequalities of $P_i(G)$ are also valid on its theta relaxations $\THETABODY_k(I)$ by presenting a sum of squares representation modulo the ideal. For now, we observe that by considering degrees, we can get a bound on which theta bodies are trivial; that is, equal to the hypercube $[0,1]^n$.

\begin{lemma} \label{lowerbound}
Let $V \subseteq \{0,1\}^n$ be lower-comprehensive, and suppose that all elements $x
\notin V$ have $\sum_j x_j \ge k$. Let $I$ be the vanishing ideal of $V$. Then 
for $l < k/2$, $\THETABODY_l(I) = [0,1]^n$.
\end{lemma}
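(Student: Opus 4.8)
The plan is to prove the two inclusions $\THETABODY_l(I) \subseteq [0,1]^n$ and $[0,1]^n \subseteq \THETABODY_l(I)$ separately. The first holds already for every $l \ge 1$: since $x_j^2 - x_j \in I$ we have $x_j \equiv x_j^2$ and $1 - x_j \equiv (1-x_j)^2 \bmod I$, so $x_j$ and $1-x_j$ are $1$-sos (hence $l$-sos) mod $I$, and therefore every point of $\THETABODY_l(I)$ satisfies $0 \le x_j \le 1$.

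All the work is in the reverse inclusion, for which it suffices, by the definition of the theta body, to show that any linear $f$ that is $l$-sos mod $I$ is nonnegative on all of $[0,1]^n$. So fix such an $f$, write $f \equiv \sum_{j=1}^m g_j^2 \bmod I$ with each $\deg g_j \le l$, and put $p := f - \sum_j g_j^2 \in I$, so that $\deg p \le \max(1, 2l) = 2l < k$. The key idea is to pass from $I$ to the smaller ideal $J := \langle x_1^2 - x_1, \ldots, x_n^2 - x_n\rangle \subseteq I$, which is legitimate here precisely because the extra generators of $I$ are of high degree: by hypothesis every $x^S$ with $S \notin V$ has $\deg x^S = \sum_j (\chi_S)_j \ge k > 2l$. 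Let $q$ be a multilinear representative of $p$ modulo $J$ (repeatedly replace $x_j^a$, $a\ge 2$, by $x_j$); then $\deg q \le \deg p < k$, so $q$ is a linear combination of squarefree monomials $x^T$ with $|T| < k$, and each such $T$ lies in $V$ by the degree hypothesis. Since $p \in I$ and $p - q \in J \subseteq I$, we get $q \in I$; but $q$ is a combination of the basis $\{x^T : T \in V\}$ of $\RR[x]/I$ supplied by Lemma \ref{ideal}, so $q = 0$. Hence $p \in J$, i.e., $f = \sum_j g_j^2$ modulo $J$.

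Since $J$ is the vanishing ideal of the full cube $\{0,1\}^n$, this last congruence gives $f(x) = \sum_j g_j(x)^2 \ge 0$ for every $x \in \{0,1\}^n$. As $f$ is affine it attains its minimum over the polytope $[0,1]^n$ at a vertex, so $f \ge 0$ on $[0,1]^n$, which yields $[0,1]^n \subseteq \THETABODY_l(I)$ and finishes the proof. I expect the only delicate step to be the degree bookkeeping in the middle paragraph: one must verify carefully that reducing $p$ modulo the full ideal $I$ and modulo its subideal $J$ of ``Boolean'' relations give the same answer in degrees below $k$, which is exactly where the hypothesis $l < k/2$ is used; everything else is routine.
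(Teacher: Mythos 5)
Your proof is correct and follows essentially the same route as the paper: both arguments hinge on the degree count $\deg\bigl(f-\sum_j g_j^2\bigr)\le 2l<k$ forcing the certificate to lie in the Boolean ideal $\langle x_j^2-x_j\rangle$, so that $f$ is nonnegative on the whole cube. The only difference is cosmetic: where the paper cites that the generators of Lemma \ref{ideal} form a Gr\"obner basis whose only low-degree elements are the $x_j^2-x_j$, you carry out the multilinear reduction and linear-independence argument by hand, and you finish with a direct vertex argument instead of invoking $\THETABODY_l(I')=[0,1]^n$.
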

\begin{proof}
Let $f$ be linear with $f \equiv \sum_j g_j^2 \mod I$ with each $g_j$ of degree
at most $l$. Then $f - \sum_j g_j^2 =: F \in I$, and $F$ has degree at most $2l
< k$, since $\deg(g_j^2) \le 2l$. But the basis from Lemma \ref{ideal} is a Groebner basis, and the only
elements with degree less than $k$ are $x_j^2 - x_j$, so $F \in I' := \langle x_j^2 - x_j ;j = 1, \ldots, n\rangle$. Thus $\THETABODY_l(I) \supseteq \THETABODY_l(I') = [0,1]^n$.
\end{proof}

Let $V_k$ be the subset of $V$ whose elements have at most $k$ entries equal to one. For convenience, we will often identify the elements of $V$, characteristic vectors $\chi_S$ for $S \subseteq \{1,\ldots,n\}$, with their supports, via $S \leftrightarrow \chi_S$. Given $y \in \RR^{V_{2k}}$ we denote the {\em reduced 
moment matrix} of $y$ with respect to $I$ to be the matrix $M_{V_k}(y) \in \RR^{V_k \times V_k}$ defined by 
$$[M_{V_k}(y)]_{X,Y}=
\left\{  
\begin{array}{ll} 
y_{X \cup Y} & \ \ \textrm{ if } X \cup Y \in V, \\
\\
0            & \ \ \textrm{ otherwise. }   
\end{array} 
\right.$$

With these matrices we can finally give a semidefinite description of $\THETABODY_k(I)$.

\begin{proposition}
With $I$ and $V$ as before, $\THETABODY_k(I)$ is the canonical projection onto $\mathbb{R}^n$ via the coordinates $(y_{e_1}, \cdots, y_{e_n})$ of the set
$$\{y \in \RR^{V_{2k}}\ : \ M_{V_k}(y) \succeq 0 \textrm{ and } y_{0}=1\}.$$
In particular, optimizing to arbitrary fixed precision over $\THETABODY_k(I)$
can be done in time polynomial in $n$, for fixed $k$.
\end{proposition}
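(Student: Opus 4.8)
The plan is to realize the right-hand spectrahedral projection as precisely the polar object attached to the cone of linear functions that are $k$-sos modulo $I$, which is what the definition of $\THETABODY_k(I)$ asks for. First I set up a dictionary. By Lemma~\ref{ideal} the classes of the squarefree monomials $x^S$ ($S\in V$) form a basis of $\RR[x]/I$; since $x_j^2\equiv x_j$ and $x^S\equiv 0$ for $S\notin V$, the image in $\RR[x]/I$ of the polynomials of degree $\le d$ has basis $\{x^S : S\in V_d\}$. For $y\in\RR^{V_{2k}}$ let $L_y$ be the linear functional on this degree-$\le 2k$ space with $L_y(x^S)=y_S$. A direct check gives $[M_{V_k}(y)]_{X,Y}=L_y(x^Xx^Y)$ (using $x^Xx^Y\equiv x^{X\cup Y}$, which is $0$ when $X\cup Y\notin V$), and writing a degree-$\le k$ polynomial as $g\equiv\sum_{S\in V_k}c_Sx^S$ gives $L_y(g^2)=c^{\top}M_{V_k}(y)\,c$. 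In particular $M_{V_k}(y)\succeq 0$ iff $L_y(g^2)\ge 0$ for all $g$ of degree $\le k$.

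The inclusion ``$\supseteq$'' is then immediate. If $M_{V_k}(y)\succeq 0$ and $y_0=1$, put $x=(y_{e_1},\dots,y_{e_n})$; for a linear $f=f_0+\sum_i a_ix_i$ we have $L_y(f)=f_0y_0+\sum_i a_iy_{e_i}=f(x)$ (here we use $\emptyset\in V$ and $\{i\}\in V$, the latter by the standing assumption that $V$ contains the canonical basis). If moreover $f\equiv\sum_j g_j^2\bmod I$ with $\deg g_j\le k$, then $f(x)=L_y(f)=\sum_j L_y(g_j^2)\ge 0$, since $L_y$ is well defined on classes modulo $I$ of degree $\le 2k$. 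So every point of the spectrahedral projection lies in $\THETABODY_k(I)$.

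For ``$\subseteq$'' I dualize. The feasible set $Q_k=\{y : y_0=1,\ M_{V_k}(y)\succeq 0\}$ is compact: the $2\times 2$ minor on rows $\emptyset,X$ together with $y_0=1$ forces $0\le y_X\le 1$ for $X\in V_k$, and any remaining coordinate $y_{X\cup Y}$ is bounded in absolute value by $\sqrt{y_Xy_Y}\le 1$. Hence $\pi(Q_k)$ is a compact convex set, and if $x\notin\pi(Q_k)$ there is a linear $f$ with $f\ge 0$ on $\pi(Q_k)$ and $f(x)<0$, i.e.\ $\min\{L_y(f) : y\in Q_k\}\ge 0$. This is a semidefinite program satisfying Slater's condition: the moment vector $y^{\star}$ of the uniform measure on $V$ has $M_{V_k}(y^{\star})=\tfrac{1}{|V|}\sum_{v\in V}u_vu_v^{\top}\succ 0$, where $u_v=(x^X(v))_{X\in V_k}$ and the vectors $u_v$ span $\RR^{V_k}$ by an inclusion--exclusion argument using lower-comprehensiveness. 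Its dual program works out to $\max\{\lambda : f-\lambda \text{ is } k\text{-sos mod } I\}$, because the adjoint of the map $y\mapsto M_{V_k}(y)$ sends a PSD matrix $Z$ to the coefficient vector of $v^{\top}Zv\bmod I$ (with $v=(x^X)_{X\in V_k}$). By strong duality the common optimum is $\ge 0$, so $f-\lambda_0$ is $k$-sos mod $I$ for some $\lambda_0\ge 0$, whence $f=(f-\lambda_0)+\lambda_0$ is itself $k$-sos mod $I$. Thus $f$ is a valid inequality for $\THETABODY_k(I)$ violated at $x$, so $x\notin\THETABODY_k(I)$; contrapositively $\THETABODY_k(I)\subseteq\pi(Q_k)$, and combined with the previous paragraph we get equality.

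For the complexity statement, for fixed $k$ one has $|V_{2k}|\le\sum_{j\le 2k}\binom{n}{j}=O(n^{2k})$ and $M_{V_k}(y)$ has size $|V_k|\times|V_k|=O(n^k)\times O(n^k)$, so membership in $\pi(Q_k)$ and linear optimization over it form a semidefinite program of size polynomial in $n$; since $Q_k$ is bounded and strictly feasible, this is solved to any fixed precision in polynomial time by the ellipsoid method (or interior-point methods). The only step that is not pure bookkeeping is the strong-duality argument converting the separating hyperplane into a sum-of-squares certificate, and it goes through cleanly here precisely because the hypotheses ($V\subseteq\{0,1\}^n$ finite, lower-comprehensive, containing the standard basis) make $Q_k$ compact and supply the explicit Slater point above, so no closedness pathologies arise.
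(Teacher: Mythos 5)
The paper states this proposition without proof (it is imported from the background references \cite{gpt} and \cite{frg}), so there is no in-paper argument to compare against; your proof is correct and follows the standard route used in those sources: the easy inclusion via evaluating the moment functional $L_y$ on a sum-of-squares certificate, and the reverse inclusion via separation plus SDP strong duality, with the uniform-measure moment vector supplying the Slater point that rules out closedness pathologies in the $k$-sos cone. The details that actually need checking --- that $\{x^S: S\in V_{2k}\}$ spans the degree-$\le 2k$ part of $\RR[x]/I$, the compactness of the feasible set from the $2\times 2$ minors, and the identification of the dual with $\max\{\lambda: f-\lambda \text{ is } k\text{-sos mod } I\}$ --- are all handled correctly.
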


Now we can consider the specific case of the $K_i$-free problem. Here the variety $V \subseteq \RR^{\KK_{i-1}(G)}$ is the set of characteristic vectors of $K_i$-free subsets of $\KK_{i-1}(G)$, $V_k$ is the subset of $V$ of elements of size at most $k$, and $I$ is the vanishing ideal of $V$, described by Lemma \ref{ideal}. Since the $K_i$s in $G$ are the minimal elements not in $V$, by Lemma \ref{ideal} we can write the ideal $I$ as follows.
$$I=\langle x_j^2 - x_j :j \in \KK_{i-1}(G); \prod_{j \subseteq K}x_j: K \in \KK_i(G)\rangle.$$

For example, let $G$ be a triangle, with edges A, B, C, and consider the triangle free problem on $G$. Then the ideal is
$$I = \langle x_A^2 - x_A, x_B^2 - x_B,  x_C^2 - x_C, x_Ax_Bx_C \rangle,$$
and the variety $V$ is as follows.
$$V = \{\emptyset,\{A\},\{B\},\{C\},\{A,B\},\{A,C\},\{B,C\}\} \equiv \{0,1,2,3,4,5,6\}.$$ 
Note that here, we again use our identification of sets with their characteristic vectors. To avoid writing, e.g., $y_{\{A,C\}}$ or even $y_{\chi_{\{A,C\}}}$, we label the elements of $V$ by numbers as above.
Then the moment matrix $M_{V_2}(y)$ is as follows:
$$M_{V_2}(y) = 
\left[
\begin{array}{ccccccc}
y_0 & y_1 & y_2 & y_3 & y_4 & y_5 & y_6 \\
y_1 & y_1 & y_4 & y_5 & y_4 & y_5 & 0 \\
y_2 & y_4 & y_2 & y_6 & y_4 & 0 & y_6 \\
y_3 & y_5 & y_6 & y_3 & 0 & y_5 & y_6 \\
y_4 & y_4 & y_4 & 0 & y_4 &0 & 0 \\
y_5& y_5 & 0 & y_5 & 0 & y_5 & 0 \\
y_6 & 0 & y_6 & y_6 & 0 & 0 & y_6 \\
\end{array}
\right]$$
Projecting the set $\{y: y_0 = 1, M_{V_2}(y) \succeq 0 \}$ onto $(y_1,y_2,y_3)$ gives $\THETABODY_2(I)$ for this graph.

\section{Polynomial-time algorithm}
A graph $H$ is a $K_i$-$p$-hole if $H$ is the union of $G_1, \ldots, G_p$, each a copy of $K_i$, where $G_j$ and $G_l$ share a common $K_{i-1}$ if and only if $j-l = \pm 1 \mod p$; see Figure \ref{9holeintro}. Theorem 3.5 in \cite{conforti} establishes that for $i \ge 3$ and odd $p$, the inequality $\sum_{\KK_{i-1}(H)} x_j \le (\frac{p-1}{2})(2i-3)+i-2$ defines a facet of $P_i(G)$ for each induced $K_i$-$p$-hole $H$ of $G$.
We will show that the facets corresponding to induced $K_i$-$p$-holes are valid
on $\THETABODY_{\lceil i/2 \rceil}(I)$, which can be optimized over in
polynomial time for fixed $i$, and relate this complexity result with the ones in Conforti, Corneil and Mahjoub \cite{conforti}. Note that in this section, the ideal $I$ always refers to the $K_i$-free problem, and the associated graph $G$ will be clear from context.
Therefore, we will say $k$-sos for $k$-sos mod $I$.

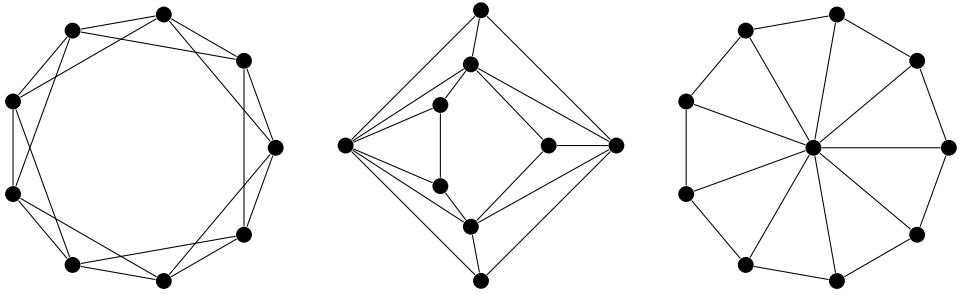
\begin{figure}[htd]
\begin{tikzpicture}
  [scale=1.8,every node/.style={circle,fill=black,inner
sep=0pt, minimum width=6pt}]  \node (0) at
(1.00000000000000,0.000000000000000) {};
  \node (1) at (0.766044443118978,0.642787609686539) {};
  \node (2) at (0.173648177666930,0.984807753012208) {};
  \node (3) at (-0.500000000000000,0.866025403784439) {};
  \node (4) at (-0.939692620785908,0.342020143325669) {};
  \node (5) at (-0.939692620785908,-0.342020143325669) {};
  \node (6) at (-0.500000000000000,-0.866025403784439) {};
  \node (7) at (0.173648177666930,-0.984807753012208) {};
  \node (8) at (0.766044443118978,-0.642787609686540) {};

  \draw (0) -- (1);
  \draw (0) -- (2);
  \draw (0) -- (7);
  \draw (0) -- (8);
  \draw (1) -- (2);
  \draw (1) -- (3);
  \draw (1) -- (8);
  \draw (2) -- (3);
  \draw (2) -- (4);
  \draw (3) -- (4);
  \draw (3) -- (5);
  \draw (4) -- (5);
  \draw (4) -- (6);
  \draw (5) -- (6);
  \draw (5) -- (7);
  \draw (6) -- (7);
  \draw (6) -- (8);
  \draw (7) -- (8);

\end{tikzpicture}\hfill
\begin{tikzpicture}
  [scale=.9,every node/.style={circle,fill=black,inner
sep=0pt, minimum width=6pt}]  \node (0) at (1,0) {};
  \node (1) at (-0.150000000000000,1.20000000000000) {};
  \node (2) at (-0.600000000000000,0.600000000000000) {};
  \node (3) at (-0.600000000000000,-0.600000000000000) {};
  \node (4) at (-0.150000000000000,-1.20000000000000) {};
  \node (5) at (2,0) {};
  \node (6) at (0,2) {};
  \node (7) at (-2,0) {};
  \node (8) at (0,-2) {};

  \draw (0) -- (1);
  \draw (0) -- (4);
  \draw (0) -- (5);
  \draw (1) -- (2);
  \draw (1) -- (5);
  \draw (1) -- (6);
  \draw (1) -- (7);
  \draw (2) -- (3);
  \draw (2) -- (7);
  \draw (3) -- (4);
  \draw (3) -- (7);
  \draw (4) -- (5);
  \draw (4) -- (7);
  \draw (4) -- (8);
  \draw (5) -- (6);
  \draw (5) -- (8);
  \draw (6) -- (7);
  \draw (7) -- (8);

\end{tikzpicture}\hfill
\begin{tikzpicture}
  [scale=1.8,every node/.style={circle,fill=black,inner
sep=0pt, minimum width=6pt}]  \node (0) at
(1.00000000000000,0.000000000000000) {};
  \node (1) at (0.766044443118978,0.642787609686539) {};
  \node (2) at (0.173648177666930,0.984807753012208) {};
  \node (3) at (-0.500000000000000,0.866025403784439) {};
  \node (4) at (-0.939692620785908,0.342020143325669) {};
  \node (5) at (-0.939692620785908,-0.342020143325669) {};
  \node (6) at (-0.500000000000000,-0.866025403784439) {};
  \node (7) at (0.173648177666930,-0.984807753012208) {};
  \node (8) at (0.766044443118978,-0.642787609686540) {};
  \node (9) at (0.000000000000000,0.000000000000000) {};

  \draw (0) -- (1);
  \draw (0) -- (8);
  \draw (0) -- (9);
  \draw (1) -- (2);
  \draw (1) -- (9);
  \draw (2) -- (3);
  \draw (2) -- (9);
  \draw (3) -- (4);
  \draw (3) -- (9);
  \draw (4) -- (5);
  \draw (4) -- (9);
  \draw (5) -- (6);
  \draw (5) -- (9);
  \draw (6) -- (7);
  \draw (6) -- (9);
  \draw (7) -- (8);
  \draw (7) -- (9);
  \draw (8) -- (9);

\end{tikzpicture}
	\caption{Three non-isomorphic $K_3$-9-holes.}
	\label{9holeintro}
\end{figure}

The first lemma is an auxiliary result that certain functions are sums of squares. For an ideal $I$, a function $f$ is said to be {\em idempotent} mod $I$ if $f^2 \equiv f \mod I$. Since an idempotent is visibly a square, we can use it as a summand in our sum of squares. In practice, idempotents end up being very useful in constructing sums of squares.

\begin{lemma}\label{ki}\
Suppose 
$A \subseteq B \subseteq \KK_{i-1}(K_i)$. Denote the variables corresponding to elements of $\KK_{i-1}(K_i)$ by $\{x_k:1 \le k \le i\}$.
Then $f(x) = |B \setminus A| - x^A+x^B- \sum_{k\in B\setminus A} x_k $ is $|B|$-sos mod $I$.
\end{lemma}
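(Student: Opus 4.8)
\noindent\textit{Proof sketch (proposed approach).}
The plan is to decompose $f$ as a telescoping sum of ``atomic'' pieces, one for each element of $B\setminus A$, each of which turns out to be an idempotent modulo $I$ and hence a single square modulo $I$. Write $B\setminus A=\{m_1,\dots,m_d\}$ with $d=|B\setminus A|$, and set $B_j=A\cup\{m_1,\dots,m_j\}$, so that $B_0=A$ and $B_d=B$. First I would record the purely formal identity
$$f(x)=\sum_{j=1}^d\bigl(1-x^{B_{j-1}}+x^{B_j}-x_{m_j}\bigr),$$
which is immediate by telescoping: the constants sum to $d=|B\setminus A|$, the terms $-x^{B_{j-1}}$ and $x^{B_j}$ collapse to $-x^A+x^B$, and the $x_{m_j}$ sum to $\sum_{k\in B\setminus A}x_k$. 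This is an exact identity in $\RR[x]$, with no reduction modulo $I$ needed.

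Next I would observe that since $m_j\notin B_{j-1}$ we have the exact monomial identity $x^{B_j}=x_{m_j}\,x^{B_{j-1}}$, so the $j$-th summand factors as $g_j:=(1-x_{m_j})(1-x^{B_{j-1}})$. Modulo $I$ every variable is idempotent, $x_k^2\equiv x_k$, so $1-x_{m_j}$ is idempotent mod $I$ and $1-x^{B_{j-1}}=1-\prod_{k\in B_{j-1}}x_k$ is idempotent mod $I$; a product of commuting idempotents is again idempotent, hence $g_j\equiv g_j^2\bmod I$. Therefore $f\equiv\sum_{j=1}^d g_j^2\bmod I$, exhibiting $f$ as a sum of squares modulo $I$.

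It only remains to track degrees. Since $|B_{j-1}|=|A|+(j-1)$, the polynomial $g_j$ has degree $1+|B_{j-1}|=|A|+j\le|A|+d=|B|$, so every $g_j$ has degree at most $|B|$ and $f$ is $|B|$-sos mod $I$, as claimed. I do not expect a serious obstacle: the only real decision is to telescope one element of $B\setminus A$ at a time rather than hunting for a single global sum-of-squares certificate, after which both the factorization into idempotents and the degree bound are automatic. It is perhaps worth noting that this certificate uses only the squarefree relations $x_k^2-x_k$ and not the clique monomial generating $I$.
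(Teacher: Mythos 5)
Your proof is correct and follows essentially the same route as the paper: the paper builds the same chain $A=A_1\subset\cdots\subset A_m=B$ and writes $f$ as the telescoping sum of the idempotents $1-x_k-x^{A_k}+x^{A_{k+1}}$, which are exactly your $g_j$. The only difference is that you spell out the idempotency via the factorization $(1-x_{m_j})(1-x^{B_{j-1}})$, which the paper leaves as a ``check.''
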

\begin{proof}
Let $A = A_1 \subset A_2 \ldots \subset A_m = B$, where $|A_{k+1} \setminus A_k|
= 1$, and without loss of generality, the variable corresponding to the element of $A_{k+1} \setminus A_k$ is $x_k$. Check that $g_k(x) = 1-x_k-x^{A_k} + x^{A_{k+1}}$ is idempotent mod $I$. Adding them
up we get that $f(x) = \sum_{k=1}^{m-1} g_k(x)$. Since each summand has degree at most $|B|$ the assertion holds.
\end{proof}

The stable set polytope $\textup{STAB}(G)$ has a fractional relaxation $\textup{FRAC}(G)$, given by imposing nonnegativities $x_i \ge 0$, and inequalities $x_i + x_j \le 1$ for each edge $(i,j)$ of $G$. Similarly, we can define a fractional $K_i$-free polytope $\textup{FRAC}_i(G)$ by imposing nonnegativities, and the inequalities $\sum_{k \in \KK_{i-1}(H)}x_k \le i-1$ for each $H \in \KK_i(G)$. The following corollary shows that these inequalities are $\lceil i/2 \rceil$-sos, and therefore that the relaxation $\THETABODY_{\lceil i/2 \rceil}(I) \subseteq \textup{FRAC}_i(G)$. This is parallel to the result that the Lov\'{a}sz theta body lies inside $\textup{FRAC}(G)$.

\begin{corollary} \label{frac}
The inequality $\sum_{k \in \KK_{i-1}(H)}x_k \le i-1$ is valid on $\THETABODY_{\lceil i/2 \rceil}(I)$ for every $H \in \KK_i(G)$.
\end{corollary}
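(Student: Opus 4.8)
The plan is to produce, for each fixed $H \in \KK_i(G)$, an explicit representation of the linear polynomial $f := (i-1) - \sum_{k \in \KK_{i-1}(H)} x_k$ as a sum of squares of polynomials of degree at most $\lceil i/2 \rceil$ modulo $I$. By the very definition of $\THETABODY_{\lceil i/2\rceil}(I)$ this is exactly what is needed: once $f$ is $\lceil i/2\rceil$-sos modulo $I$, the inequality $f \ge 0$, i.e.\ $\sum_{k\in\KK_{i-1}(H)} x_k \le i-1$, holds on $\THETABODY_{\lceil i/2\rceil}(I)$. The only variables that will occur are the $i$ variables $x_k$ indexed by $k \in \KK_{i-1}(H)$, and the only relation beyond $x_k^2\equiv x_k$ that I will use is the single generator $\prod_{k\in\KK_{i-1}(H)} x_k \equiv 0 \bmod I$, which lies in $I$ precisely because $H\in\KK_i(G)$.

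The heart of the argument is to partition $\KK_{i-1}(H) = B \sqcup C$ into two blocks with $|B| = \lceil i/2\rceil$ and $|C| = \lfloor i/2\rfloor$, and to use the polynomial identity
$$f = \bigl(1 - x^B - x^C\bigr) + \Bigl(|B| - 1 + x^B - \sum_{k \in B} x_k\Bigr) + \Bigl(|C| - 1 + x^C - \sum_{k \in C} x_k\Bigr),$$
in which the $x^B$ and $x^C$ terms cancel. I then handle the three summands separately. For the first, since $B$ and $C$ are disjoint with union $\KK_{i-1}(H)$ we have $x^B x^C = \prod_{k\in\KK_{i-1}(H)}x_k \equiv 0 \bmod I$, so $1 - x^B - x^C$ is idempotent modulo $I$ and hence congruent modulo $I$ to its own square; since it has degree $\max(|B|,|C|) = \lceil i/2\rceil$, this presents it as $\lceil i/2\rceil$-sos. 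For the second and third summands I invoke Lemma~\ref{ki} with $A = \emptyset$ (taking its ``$B$'' to be $B$, respectively $C$): the lemma states exactly that $|B|-1+x^B-\sum_{k\in B}x_k$ is $|B|$-sos modulo $I$ and $|C|-1+x^C-\sum_{k\in C}x_k$ is $|C|$-sos modulo $I$, and both $|B|$ and $|C|$ are at most $\lceil i/2\rceil$. Concatenating the squares coming from the three pieces yields the desired degree-$\lceil i/2\rceil$ sum-of-squares representation of $f$.

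There is a small compatibility point to dispatch along the way: Lemma~\ref{ki} is phrased for the $K_i$-free ideal of the clique $K_i$ in isolation, while here $I$ is the ideal of $G$. This causes no trouble, because for a monomial $x^S$ with $S\subseteq\KK_{i-1}(H)$ one has $x^S\equiv 0\bmod I$ if and only if $S=\KK_{i-1}(H)$ --- an $i$-clique is determined by the list of its $(i-1)$-subcliques, so no $K_i$ of $G$ other than $H$ can have all of its $(i-1)$-subcliques inside $\KK_{i-1}(H)$ --- so the two ideals restrict to the same relations on the variables $\{x_k : k\in\KK_{i-1}(H)\}$, and the certificate of Lemma~\ref{ki} transfers verbatim.

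The only genuinely delicate point is the degree accounting. Splitting off $C=\emptyset$ merely recovers the direct use of Lemma~\ref{ki} and gives a useless certificate of degree $i$; and the natural alternative of pairing the $x_k$ two at a time and recursing gives a certificate of degree roughly the next power of two above $i$. It is the balanced two-block split that pins the degree of the idempotent term $1-x^B-x^C$ down to exactly $\lceil i/2\rceil$, which by Lemma~\ref{lowerbound} is also the smallest value it could possibly be. The remaining steps are routine, the one conceptual observation being that the full-product relation $\prod_{k\in\KK_{i-1}(H)}x_k\equiv 0$ is the only ideal relation the certificate needs.
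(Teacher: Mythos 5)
Your proposal is correct and is essentially identical to the paper's own proof: the paper also splits $\KK_{i-1}(H)$ into a set $J$ of size $\lceil i/2\rceil$ and its complement $J^c$, applies Lemma~\ref{ki} with $A=\emptyset$ to each block, and adds the idempotent $1 - x^J - x^{J^c}$. The extra remarks you include (transferring the lemma from the ideal of $K_i$ to that of $G$, and the optimality of the balanced split) are sound but not needed beyond what the paper states.
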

\begin{proof}
Let $J$ be a subset of $\KK_{i-1}(H)$ of size $\lceil i/2 \rceil$. Applying Lemma \ref{ki} with 
$A=\emptyset$ and $B=J$ we see that 
 $$ f(x) =  |J|-1 + x^J - \sum_{l \in J} x_l $$ is $|J|$-sos. Similarly 
$$g(x)= |J^c|-1 + x^{J^c} - \sum_{l \in J^c} x_l $$ is $|J^c|$-sos ($J^c$ is the complement of $J$ in $\KK_{i-1}(H)$). Finally observe that 
$h(x) = 1 - x^J - x^{J^c}$ is idempotent. Since these polynomials are all $\lceil i/2 \rceil$-sos, it remains to observe that their sum,
$$f(x)+g(x)+h(x)=i-1-\sum_{k \in \KK_{i-1}(H)}x_k,$$
is also $\lceil i/2 \rceil$-sos.
\end{proof}

Now we are ready to prove that the $K_i$-$p$-hole inequalities are valid on $\THETABODY_{\lceil i/2 \rceil}(I)$. Recall that if $H$ is a $K_i$-$p$-hole, we write $G_1,\ldots,G_p$ for the $K_i$s in $H$, with adjacent $K_i$ sharing a common $K_{i-1}$. 

\begin{theorem} \cite[Theorem 3.4]{conforti}
If $G$ has an induced $K_i$-$p$-hole $H$, then the inequality
$$\frac{p-1}{2} (2i-3)+i-2 - \sum_{i \in H} x_i \geq 0$$
defines a facet of $P_i(G)$ for $i \ge 3$.
\end{theorem}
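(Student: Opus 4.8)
The plan is to establish the two defining properties of a facet separately: validity of the inequality on $P_i(G)$, and that the face it cuts out has dimension $\dim P_i(G)-1$. Throughout write $\beta = \frac{p-1}{2}(2i-3)+i-2$ for the right-hand side, and recall from the complementarity of the two problems that a set $S \subseteq \KK_{i-1}(H)$ is $K_i$-free exactly when $\KK_{i-1}(H)\setminus S$ is a $K_i$-cover of $H$. Since each $G_j$ contributes $i$ subcliques of size $i-1$ and the $p$ cliques shared by consecutive $G_j$ are double counted, $|\KK_{i-1}(H)| = p(i-1)$, and a short computation gives $p(i-1)-\beta = \frac{p+1}{2}$. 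Hence the inequality $\sum_{k \in \KK_{i-1}(H)} x_k \le \beta$ is equivalent to the assertion that every $K_i$-cover of $H$ has at least $\frac{p+1}{2}$ elements.

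Validity thus reduces to a covering bound, which I would prove by modelling $H$ on the cycle $C_p$ whose vertices are $G_1,\dots,G_p$ and whose edges are the shared cliques $G_j\cap G_{j+1}$. Because $H$ is induced and consecutive indices are the only ones sharing a $K_{i-1}$, each $(i-1)$-clique of $H$ is a subclique of at most two of the $G_j$; since only subcliques of the $G_j$ are useful for covering, a $K_i$-cover corresponds to a choice of vertices and edges of $C_p$ covering all $p$ vertices, with every chosen element covering at most two vertices. Therefore any cover has size at least $\lceil p/2\rceil = \frac{p+1}{2}$, using that $p$ is odd, which is exactly the required bound; a maximum matching of $C_p$ together with one private clique on the single leftover $G_j$ attains it, so the inequality is valid and tight.

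For the facet statement I would first note that $P_i(G)$ is full dimensional: $\emptyset$ and the $n = |\KK_{i-1}(G)|$ singletons $\{k\}$ are $K_i$-free (each $K_i$ has $i\ge 3$ subcliques, so at least one is absent) and affinely independent, giving $\dim P_i(G)=n$. It then suffices to show that any linear equation $\langle c,x\rangle=\delta$ holding on every maximum $K_i$-free set is a scalar multiple of $\sum_{k\in\KK_{i-1}(H)}x_k=\beta$; equivalently, that $c_k=0$ for $k\notin\KK_{i-1}(H)$ and $c_k=c_{k'}$ for all $k,k'\in\KK_{i-1}(H)$ (equivalently, one exhibits $n$ affinely independent maximum sets). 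The off-hole equalities I would obtain by toggling: fix a maximum $K_i$-free set $T\subseteq\KK_{i-1}(H)$ of the hole, and for each clique $k$ outside $H$ display $T$ and $T\cup\{k\}$ as two $K_i$-free sets both lying on the face, since the $H$-sum is unchanged, forcing $c_k=0$. The point needing care is that adding an outside clique must not complete a new $K_i$; this is arranged by taking $T$ with no cliques outside $H$ and treating separately the finitely many configurations in which an outside $K_i$ meets the hole.

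The heart of the argument is showing $c_k=c_{k'}$ for cliques inside the hole, and here I would pass to complements, where maximum $K_i$-free sets correspond to minimum $K_i$-covers, that is, to the size-$\frac{p+1}{2}$ configurations consisting of a near-perfect matching of $C_p$ plus one uncovered vertex. Two minimum covers differing in a single clique produce two maximum $K_i$-free sets $T,T'$ with $T'=(T\setminus\{k\})\cup\{k'\}$, both on the face, whence $c_k=c_{k'}$. It then remains to verify that the graph on $\KK_{i-1}(H)$ whose edges are such single-clique exchanges is connected: rotating the matching one step around the cycle while sliding the uncovered vertex reaches every shared clique, and exchanging a shared clique for a private clique of an incident $G_j$ reaches the private cliques, so connectivity forces all $c_k$ equal. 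I expect this final combinatorial step—producing enough single-exchange pairs of minimum covers to connect every $(i-1)$-clique of the hole, and in particular controlling how the private cliques interact with the cyclic matching structure and the parity of $p$—to be the main obstacle; the validity bound and the vanishing of the off-hole coefficients are comparatively routine.
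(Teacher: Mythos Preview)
The paper does not prove this theorem at all: it is quoted verbatim from \cite{conforti} (hence the attribution ``\cite[Theorem 3.4]{conforti}'') and is included only as background, with no accompanying proof. The paper's own contribution is the \emph{next} statement, Lemma~\ref{kiphole}, which gives a $\lceil i/2\rceil$-sos certificate for the same inequality and thereby shows it is valid on $\THETABODY_{\lceil i/2\rceil}(I)$. So there is no ``paper's own proof'' to compare your attempt against; for that you would have to consult the Conforti--Corneil--Mahjoub paper.

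That said, a brief comment on your sketch on its own terms. Your validity argument via the covering lower bound on $C_p$ is fine: covering the $p$ distinguished $K_i$'s $G_1,\dots,G_p$ already forces at least $\lceil p/2\rceil$ cliques, and any further $K_i$'s in $H$ only strengthen that bound. The delicate points you have not fully controlled are (i) the structural facts about $H$ you silently use, namely that $|\KK_{i-1}(H)|=p(i-1)$ and that the $G_j$ are the only $K_i$'s in $H$; the definition in the paper constrains only which pairs $G_j,G_l$ share a $K_{i-1}$, and Figure~\ref{9holeintro} shows non-isomorphic $K_3$-9-holes, so these counts require an argument (they do hold, but they are not free); and (ii) the off-hole toggling step $T\mapsto T\cup\{k\}$, which can create a $K_i$ in $G$ that mixes hole and non-hole $(i-1)$-cliques, and your ``treat the finitely many configurations separately'' is exactly the place where the real work of a facet proof lives. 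Your exchange-graph idea for equalizing the $c_k$'s inside the hole is the standard approach and, once (i) is in hand, goes through with the cyclic matchings you describe.
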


For even $p$ these  inequalities are still valid, but not facets anymore. In the next result we give a sums of squares certificate to
the validity of these inequalities.

\begin{lemma}\label{kiphole}
The $K_i$-$p$-hole inequalities are $\lceil i/2 \rceil$-sos for $p$ odd.
\end{lemma}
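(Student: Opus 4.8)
The plan is to mimic the structure of Corollary~\ref{frac}: decompose the $K_i$-$p$-hole inequality into a sum of pieces, each of which is manifestly $\lceil i/2\rceil$-sos, using Lemma~\ref{ki} together with carefully chosen idempotents as glue. Label the $p$ copies of $K_i$ as $G_1,\dots,G_p$ cyclically, and let $S_j = \KK_{i-1}(G_j\cap G_{j+1})$ be the shared $K_{i-1}$ between consecutive cliques (indices mod $p$); the "private" $(i-1)$-cliques of $G_j$ are $\KK_{i-1}(G_j)\setminus(S_{j-1}\cup S_j)$, of which there are $i-2$ (the one $K_{i-1}$ missing a vertex of $G_j$ that is not an endpoint of either shared edge... actually one checks $|\KK_{i-1}(G_j)| = i$, two of which are shared, so $i-2$ are private). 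The total number of $(i-1)$-cliques in $H$ is then $p(i-2) + p = p(i-1)$ shared-counted, but each $S_j$ is counted once, giving $|\KK_{i-1}(H)| = p(i-2)+p = pi - p$... I will recompute: $p$ cliques each contributing $i-2$ private faces plus $p$ shared faces total, so $|\KK_{i-1}(H)| = p(i-2)+p$. The target bound is $\frac{p-1}{2}(2i-3)+i-2$.

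First I would handle the generic clique $G_j$. Pick a subset $J_j \subseteq \KK_{i-1}(G_j)$ of size $\lceil i/2\rceil$ that contains one of the two shared faces, say $S_j$; apply Lemma~\ref{ki} with $A=\emptyset$, $B=J_j$ to certify that $|J_j| - 1 + x^{J_j} - \sum_{l\in J_j} x_l$ is $\lceil i/2\rceil$-sos, and similarly for the complement $J_j^c$ within $\KK_{i-1}(G_j)$, which has size $\lfloor i/2\rfloor \le \lceil i/2\rceil$. Adding the idempotent $1 - x^{J_j} - x^{J_j^c}$ recovers $\,i-1-\sum_{k\in\KK_{i-1}(G_j)} x_k\,$ as in Corollary~\ref{frac}. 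Summing this over all $p$ cliques gives $p(i-1) - \sum_j \sum_{k\in\KK_{i-1}(G_j)} x_k$, but the shared faces $S_j$ are double-counted. So I would add back, for each $j$, the inequality $x_k \le 1$ for $k$ the single $(i-1)$-clique in $S_j$ (valid since $x_k \le 1$ is $1$-sos as $1-x_k$ is idempotent mod the ideal), which corrects the double count. This yields $p(i-1) - p - \sum_{k\in\KK_{i-1}(H)} x_k = p(i-2) - \sum x_k \ge 0$ as $\lceil i/2\rceil$-sos. But $p(i-2)$ is weaker than $\frac{p-1}{2}(2i-3)+i-2 = p(i-2) + \frac{p-1}{2}$... wait, $\frac{p-1}{2}(2i-3) = (p-1)(i-2) + \frac{p-1}{2}$, so the target is $(p-1)(i-2)+\frac{p-1}{2}+i-2 = p(i-2) + \frac{p-1}{2}$, which is *larger*, hence a weaker inequality. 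So the naive bound $p(i-2)$ would be stronger than the hole facet — meaning this crude summation over-counts the tightness and I need to not add all $p$ of the $x_k\le 1$ corrections, or to use a smarter decomposition. This signals that the real argument exploits oddness of $p$ and resembles the odd-cycle / Lovász theta proof: the saving of $\frac{1}{2}$ per step comes from pairing consecutive cliques.

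The corrected plan, which is the heart of the proof: process the cliques in consecutive pairs $(G_1,G_2), (G_3,G_4), \dots$ — but $p$ is odd, so one clique, say $G_p$, is left over. For a pair $(G_{2t-1},G_{2t})$ sharing the face $S_{2t-1}$, I would construct a single sos certificate for $\sum_{k \in \KK_{i-1}(G_{2t-1}) \cup \KK_{i-1}(G_{2t})} x_k \le 2(i-1) - 1 - 1 = 2i - 3$ (the union has $2i-1$ faces — $i$ from each minus the one shared — and the bound $2i-3$ is exactly what appears, being the pair-wise $K_i$-free bound for two cliques glued along a $K_{i-1}$). To build this: split $\KK_{i-1}(G_{2t-1})\cup\KK_{i-1}(G_{2t})$ into a set $J$ of size $\lceil i/2\rceil$ contained in $G_{2t-1}$ and containing $S_{2t-1}$, another set $J'$ of size $\lceil i/2\rceil$ contained in $G_{2t}$ and containing $S_{2t-1}$, plus their two complements (each of size $\le \lceil i/2\rceil$); the faces in $S_{2t-1}$ appear in both $J$ and $J'$. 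Using Lemma~\ref{ki} on $A=\emptyset$, $B = J$ and $B=J'$, on the complements, and on the idempotents $1 - x^J - x^{(\cdot)^c}$, plus the crucial extra idempotent built from $x^{S_{2t-1}}$ to kill the double-counted $S_{2t-1}$ term while only subtracting $1$ (not $|S_{2t-1}|=1$, consistent), I would assemble $2i-3 - \sum x_k$. Then summing the $\frac{p-1}{2}$ pair-certificates and one single-clique certificate (Corollary~\ref{frac} applied to $G_p$, giving $i-1-\sum_{\KK_{i-1}(G_p)}x_k$), and subtracting off the $\frac{p-1}{2} + 1$... — more precisely adding $x_k \le 1$ idempotent corrections only for the $\frac{p-1}{2}$ shared faces $S_{2t}$ that link distinct pairs and for $S_p$ linking $G_p$ back to $G_1$ — yields exactly $\frac{p-1}{2}(2i-3) + (i-1) - \left(\tfrac{p-1}{2}+1\right) - \sum_{\KK_{i-1}(H)} x_k = \frac{p-1}{2}(2i-3) + i - 2 - \sum_{\KK_{i-1}(H)}x_k$, which is an sos combination of terms each of degree $\le \lceil i/2\rceil$, hence $\lceil i/2\rceil$-sos.

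The main obstacle is getting the bookkeeping of the glue idempotents exactly right: I need an idempotent mod $I$ whose effect is to subtract $x^{S}$ (the monomial over a shared face, or the relevant product) and contribute exactly a $1$ and a $-x_k$ for the shared face variable $x_k$, so that after summing, every variable $x_k$ for $k\in\KK_{i-1}(H)$ appears with coefficient exactly $-1$ and the constant telescopes to $\frac{p-1}{2}(2i-3)+i-2$; this requires verifying that expressions like $1 - x_k - x^{J} + x^{J\cup\{k\}}$ (the idempotents from Lemma~\ref{ki}) chain together correctly across clique boundaries, and that the degree never exceeds $\lceil i/2\rceil$ even for the boundary-crossing terms — which holds because each monomial that appears lives inside a single $K_i$ or is $x^{S_j}$ with $|S_j| = i-1 \le $ ... no, $i-1 > \lceil i/2\rceil$ for $i \ge 4$, so in fact I must never actually form $x^{S_j}$ of full degree $i-1$; instead all products must be taken over subsets of size $\le \lceil i/2\rceil$, which is precisely why the $J$/$J^c$ splitting (with each half of size $\le \lceil i/2\rceil$) is essential, and the shared-face corrections must be phrased using only the single variable $x_k$ for the (unique, since $|S_j|=1$ when... no $|S_j| = $ number of $K_{i-1}$'s in a $K_{i-1}$, which is $1$) — ah, good: a $K_{i-1}$ contains exactly one $(i-1)$-clique, namely itself, so $S_j$ is a single variable and all corrections are genuinely degree $1$. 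Confirming this last point removes the degree worry, and the remaining work is the (routine but fiddly) arithmetic verification that the constants and linear coefficients match.
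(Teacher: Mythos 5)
Your instinct that the oddness of $p$ must enter through a cycle-type argument, and your observation that the naive per-clique summation would certify the invalid bound $p(i-2)$, are both correct. But the decomposition you then build on does not work. The central object of your plan, the ``pair certificate'' for $\sum_{k \in \KK_{i-1}(G_{2t-1})\cup\KK_{i-1}(G_{2t})} x_k \le 2i-3$, is not a valid inequality, so no sum-of-squares certificate for it can exist: two copies of $K_i$ glued along a single $K_{i-1}$ have $2i-1$ faces in total, and the set consisting of all of them except the one shared face has size $2i-2$ and is $K_i$-free (each of the two cliques is then missing exactly one of its faces, namely the shared one). The quantity $2i-3$ in the facet is not the contribution of a pair of cliques to a valid inequality; it only emerges after the whole odd cycle is assembled. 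A second, independent problem is the final bookkeeping: when a face appears with coefficient $-2$ in your sum of certificates, the only sos correction available is to add $x_k$ (which is sos since $x_k \equiv x_k^2 \bmod I$), and this leaves the constant term unchanged; you cannot subtract $\frac{p-1}{2}+1$ from the constant inside an sos combination, and even granting that step the arithmetic $(i-1)-\bigl(\frac{p-1}{2}+1\bigr)=i-2$ is off by $\frac{p-1}{2}$. As described, your scheme would at best certify a right-hand side of $p(i-2)+\frac{p+1}{2}$, which is strictly weaker than the facet.

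The paper's proof avoids both problems by changing the per-clique inequality rather than pairing cliques. For each $G_l$, with shared faces $x_l,x_{l+1}$ and private faces $y_1,\dots,y_{i-2}$, it certifies $i-2-\sum_k y_k - x_lx_{l+1}\ge 0$, replacing the two shared variables by their \emph{product}; this uses Lemma \ref{ki} once with $A=\{x_l,x_{l+1}\}$ and once with $A=\emptyset$ on two halves of the private faces, glued by the idempotent $1-x_lx_{l+1}y^{J_1}-y^{J_2}$. Summing over $l$ leaves the cross terms $\sum_l x_lx_{l+1}$, and a separate certificate built entirely from degree-two idempotents shows $k-\sum_l x_l+\sum_l x_lx_{l+1}\ge 0$; this odd-cycle step is where the saving of $k=\frac{p-1}{2}$ enters. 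If you want to repair your approach, the target should be an sos proof of that odd-cycle inequality in the shared-face variables $x_1,\dots,x_p$, not a per-pair bound on the faces.
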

\begin{proof}

Let $p = 2k+1$. For each $l=1,\ldots,2k+1$, there is exactly one $K_{i-1}$
common to $G_l$ and $G_{l-1}$ (taking indices mod $2k+1$). Denote the
corresponding variable by $x_l$. Now fix $l$. Let the variables $\{y_k\}$
correspond to the $K_{i-1}$ contained in only one $G_l$. Then the variables corresponding to $\KK_{i-1}(G_l)$ are $\{x_l,x_{l+1},y_1,\ldots,y_{i-2}\}$. We will show that $p_l(x,y) = i-2 - \sum y_k - x_lx_{l+1}$ is $\lceil i/2 \rceil$-sos.

Let $J_1 = \{1,\ldots,\lceil i/2 \rceil - 2\}$ and $J_2 = \{\lceil i/2 \rceil -
1, \ldots,i-2\}$. Applying Lemma \ref{ki}, we see that the following two functions are $\lceil i/2 \rceil$-sos. First apply the lemma with $A = \{x_l,x_{l+1}\}$ and $B = \{y_j: j \in J_1\} \cup \{x_l,x_{l+1}\}$:
$$f(x,y)=|J_1| - x_lx_{l+1} + x_lx_{l+1}y^{J_1}- \sum_{j \in J_1} y_j .$$
Second, take $A = \emptyset$ and $B = J_2$:
$$g(x,y)=|J_2| - 1  + y^{J_2}- \sum_{j \in J_2} y_j .$$
Finally, observe that the following is idempotent mod $I$:
$$h(x,y) = 1-x_lx_{l+1}y^{J_1} - y^{J_2}.$$
Adding these up we get that $p_l(x,y) = f(x,y)+g(x,y)+h(x,y)$ is $\lceil i/2 \rceil$-sos. Now with $p(x,y) = \sum_{l=1}^{2k+1} p_l(x,y)$, we have that $p$ is $\lceil i/2 \rceil$-sos:
$$p(x,y) = (2k+1)(i-2) - \sum_{l=1}^{2k+1}\sum_{y_k \subseteq G_l}y_k - \sum_{l=1}^{2k+1}x_lx_{l+1},$$
where the sum $\sum y_k$ is over all $K_{i-1}$ contained in a unique $K_i$. It
remains to show that $k-\sum x_l + \sum x_lx_{l+1}$ is $\lceil i/2 \rceil$-sos.
Observe that this is attained by adding the following two quantities, each of
which is a sum of idempotents mod $I$:
$$ \sum_{l=1}^k \left(1 - x_{2l-1} - x_{2l} - x_{2l+1} + x_{2l-1}x_{2l} +
x_{2l-1}x_{2l+1} + x_{2l}x_{2l+1}\right),$$
$$ \sum_{l=2}^k (x_{2l-1} - x_{2l-1}x_{1} - x_{2l-1}x_{2l+1} + x_{2l+1}x_{1}).$$
\end{proof}

In view of Lemma \ref{kiphole}, we see that the $K_i$-$p$-hole inequalities are
valid on $\THETABODY_{\lceil i/2 \rceil}(I)$. But since these inequalities define
facets of $P_i(G) \subseteq \THETABODY_{\lceil i/2 \rceil}(I)$, we see that they also
define facets of $\THETABODY_{\lceil i/2 \rceil}(I)$.

In Section 3.3 of \cite{conforti}, Conforti, Corneil, and Mahjoub show that polynomial time separation oracles exist for the following facets of $P_i(G)$.
\begin{enumerate}
\item Nonnegativities $0 \le x_k \le 1$,
\item $K_i$ (clique) inequalities $\sum_{k \subset K_i}x_k \le i-1$,
\item Odd wheels of order $i-1$.
\end{enumerate}
Define the polytope $Q(G)$ as the intersection of these facets, and define $Q'(G)$ by replacing (3) by 
\begin{enumerate}[label=(\arabic*$'$),ref=(\arabic*$'$),start=3]
\item $K_i$-$p$-hole facets,
\end{enumerate}
a superclass of (3).

The separation oracle provided by Conforti, Corneil, and Mahjoub \cite{conforti} allows us to use the ellipsoid method to optimize over $Q(G)$ in polynomial time. However, it is stated as an open problem in \cite{conforti} whether there is also such a polynomial time oracle for the class ($3'$), which would allow us to optimize in polynomial time over $Q'(G)$.

The results in this section show that $\THETABODY_{\lceil i/2 \rceil}(I)$, over which we can optimize in polynomial time (for
fixed $i$ and to fixed arbitrary precision), is a tighter relaxation than $Q'(G)$. Precisely, we have the following inclusions:
$$P_{i}(G) \subseteq  \THETABODY_{\lceil i/2 \rceil}(I) \subseteq Q'(G) \subseteq Q(G).$$
A big advantage of this result is how easy it is to use in practice. The polynomial time results derived from the separation oracle rely on the ellipsoid method, which is numerically unstable and poor in practice even for small instances. By contrast, optimizing over the theta body is a standard semidefinite program; hence it can be done using interior point methods and can be straightforwardly implemented using any off-the-shelf solver. The original question in \cite{conforti} is however still open, since we have not provided any separation oracle for the class ($3'$).

There are other families of facets of $P_i(G)$ for which efficient separation oracles are given in \cite{conforti}. We have not treated them here, as they would not yield any new polynomial time results.

\section{Related Problems}
Here we apply two results appearing in the literature to the triangle free problem.

\subsection{A lower bound on theta convergence}
In Section 3, we showed that the earliest possible theta body, $\THETABODY_{\lceil i/2 \rceil}(I_G)$, satisfies several inequalities defining facets of $P_i(G)$. However, in general it can take many steps of the theta hierarchy before a given facet of $P_i(G)$ is valid on $\THETABODY_k(G)$. This is the case even for the triangle free problem. In particular, we will show:
\begin{theorem}\label{trianglecut}
For $k < \frac{n-2}{4}$, $P_3(K_n) \subsetneq \THETABODY_{k}(I_{K_n})$.
\end{theorem}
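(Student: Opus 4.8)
The plan is to produce, for each $k<(n-2)/4$, an explicit point of $\THETABODY_{k}(I_{K_n})$ that violates a valid inequality of $P_3(K_n)$. The inequality I would use is Mantel's theorem: a triangle-free subgraph of $K_n$ has at most $\lfloor n^2/4\rfloor$ edges, so $\sum_{e} x_e\le \lfloor n^2/4\rfloor$ is valid on $P_3(K_n)$, and it suffices to find $x\in\THETABODY_{k}(I_{K_n})$ with $\sum_e x_e>\lfloor n^2/4\rfloor$. Since $I_{K_n}$ is invariant under the $S_n$-action on the edge coordinates, so is $\THETABODY_{k}(I_{K_n})$; being also convex, it contains the orbit-average of any of its points, and because $S_n$ is transitive on edges that average is the symmetric point $x_e\equiv t$ with $t=\sum_f x_f\big/\binom n2$, for which $\sum_e x_e$ is unchanged. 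So it is enough to exhibit a symmetric feasible point with
$$t\;>\;\frac{\lfloor n^2/4\rfloor}{\binom n2}\;=\;\tfrac12+\Theta(1/n).$$
At any such $t$ the other standard valid inequalities — $x_e\ge 0$, the clique inequalities $x_e+x_f+x_g\le 2$, and the $K_3$-$p$-hole facets of Lemma~\ref{kiphole}, which read $\sum x_e\le\tfrac{3p-1}{2}$ on their $2p$ edges — are all strictly satisfied, so Mantel's inequality is the only obstruction on symmetric points; this is the reason the threshold is sharp on that slice.

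To realize $x_e\equiv t$ at level $k$ I would write down an $S_n$-invariant pseudomoment vector $y\in\RR^{V_{2k}}$ with $y_\emptyset=1$ and $y_e=t$ for all $e$. The natural candidate is the truncation to triangle-free configurations of the i.i.d.\ Bernoulli$(t)$ moment sequence: put $y_S=t^{|S|}$ when the subgraph $S$ is triangle-free and $y_S=0$ otherwise (a correction on the non-forest subgraphs may be needed, since even a uniformly random cut assigns smaller mass to cycles than the Bernoulli measure does). This $y$ is automatically compatible with all the ideal relations encoded in the reduced moment matrix $M_{V_k}(y)$ and has the prescribed linear part, so the entire content reduces to the single condition $M_{V_k}(y)\succeq 0$.

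For the semidefiniteness I would use the $S_n$-symmetry again: $M_{V_k}(y)$ commutes with the permutation action of $S_n$ on $\RR^{V_k}$, hence is block diagonal across the $S_n$-isotypic components of $\RR^{V_k}$. For fixed $k$ there are finitely many components, of bounded dimension — the $S_n$-orbits on $V_k$ are the isomorphism classes of graphs with at most $k$ edges and no isolated vertices — and the corresponding blocks have entries that are explicit functions of $n$ and $t$. Thus $M_{V_k}(y)\succeq 0$ unwinds to a finite list of inequalities in $n,t$, and the task becomes: show that for every $k$ with $4k<n-2$ there is an admissible $t$ exceeding the Mantel threshold satisfying all of them — equivalently, that the smallest eigenvalue of $M_{V_k}(y(t))$ stays nonnegative for $t$ up to a critical value $t^*(k,n)$ with $t^*(k,n)>\lfloor n^2/4\rfloor/\binom n2$ precisely when $4k<n-2$. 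Pinning down which isotypic block, and which vector inside it, becomes negative first as $t$ grows, and computing the resulting $t^*(k,n)$, is the step that produces the constant $\tfrac14$.

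The main obstacle is exactly this last step: carrying out the symmetry reduction for the action of $S_n$ on $({\le}k)$-edge subgraphs — an association-scheme-type computation that is genuinely intricate beyond the smallest values of $k$ — and then extracting the sharp dependence of $\lambda_{\min}(M_{V_k}(y))$ on the pair $(k,n)$. In practice I would first compute the relevant blocks by machine for small $k$ (the case $k=1$ is free, since $\THETABODY_1(I_{K_n})=[0,1]^{\binom n2}$ by Lemma~\ref{lowerbound}, and $k=2$ should require $n\ge 11$ for the symmetric point to overshoot), use that to guess the extremal eigenvector, and then certify the eigenvalue bound in closed form. Should the pure truncated-Bernoulli vector fail to be semidefinite throughout the required range of $t$, the fallback is to start from a full-support $S_n$-invariant probability measure on triangle-free subgraphs of $K_n$ — whose reduced moment matrix is positive definite — and perturb it in the direction that increases $\sum_e y_e$, bounding the least eigenvalue of the base matrix from below; the same symmetry reduction controls both the base spectrum and the admissible size of the perturbation.
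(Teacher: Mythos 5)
Your overall strategy---exhibit a point of $\THETABODY_k(I_{K_n})$ violating the Mantel/clique-type inequality $\sum_e x_e \le \lfloor n^2/4\rfloor$, which is valid on $P_3(K_n)$---is the right shape of argument, and the symmetrization reduction to a constant point $x_e \equiv t$ is sound. But the proposal has a genuine gap: everything hinges on showing that your truncated-Bernoulli pseudomoment vector (or some corrected variant) has $M_{V_k}(y)\succeq 0$ for some $t$ strictly above the Mantel threshold whenever $4k<n-2$, and you never establish this. You explicitly defer the isotypic block decomposition, the identification of the extremal eigenvector, and the computation of the critical value $t^*(k,n)$ to future machine experiments and guesswork; you even flag that the candidate $y_S=t^{|S|}$ may need corrections on non-forest subgraphs. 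Since the entire content of the theorem---including where the constant $\tfrac14$ comes from---lives in exactly that omitted step, what you have is a plausible research plan, not a proof.

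The paper sidesteps this computation entirely. Since every cut is bipartite and hence triangle-free, $C_{K_n}\subseteq T_{K_n}$, and Lemma~\ref{ideal_inclusion} gives $\THETABODY_k(I(C_{K_n}))\subseteq\THETABODY_k(I(T_{K_n}))$. Laurent's theorem (cited as Theorem~4.3) already produces, for $k<\frac{n-2}{4}$, a point of $\THETABODY_k(I(C_{K_n}))$ violating $\sum_e x_e\le\frac{n^2-1}{4}$, which is a facet of both $\textup{CUT}(K_n)$ and $P_3(K_n)$; that same point then lies in $\THETABODY_k(I_{K_n})\setminus P_3(K_n)$. In effect, the hard spectral analysis you propose to do from scratch is precisely what Laurent carried out for the cut polytope, and the paper imports it via the containment of varieties. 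If you want to salvage your approach as an independent proof, you would need to actually carry out the positive semidefiniteness argument for your invariant moment vector and show the threshold it yields is at least $\frac{n-2}{4}$; as written, that claim is asserted rather than proved.
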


To prove Theorem 4.1, we will apply a result of Laurent \cite{moniquestuff} on
the {\em cut polytope}. Let $G=(N,E)$ be a graph. A {\em cut} in $G$ arises from
a partition of the node set $N$ into two sets $S_1$ and $S_2$, whereupon the
associated cut is the set of edges from $S_1$ to $S_2$. Let $C_G \subseteq
\{0,1\}^E$ be the collection of characteristic vectors of cuts in $G$. Then
$\textup{CUT}(G) = \textup{conv}(C_G)$ is the cut polytope of $G$. Similarly,
define $T_G \subseteq \{0,1\}^E$ to be the set of characteristic vectors of
triangle free sets in $G$, and as before, $P_3(G) = \textup{conv}(T_G)$. Note
that a cut is by definition a bipartite graph; hence, it is triangle free. Therefore $C_G \subseteq T_G$ and $\textup{CUT}(G) \subseteq P_3(G)$. The theta body approach has also been applied to the cut polytope by Gouveia, Laurent, Parrilo, and Thomas \cite{GLPT}. The following lemma shows that inclusion among varieties extends to inclusion of theta bodies.

\begin{lemma}\label{ideal_inclusion}
Let $X \subseteq Y$ be two real varieties, with ideals $I(X)$ and $I(Y)$. Then for any $k$, $\THETABODY_k(I(X)) \subseteq \THETABODY_k(I(Y))$.
\end{lemma}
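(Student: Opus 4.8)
The plan is to prove the statement directly from the definition of theta bodies via sums of squares modulo the ideal. Let $X \subseteq Y$, and write $I(X)$ and $I(Y)$ for their vanishing ideals. Since $X \subseteq Y$, any polynomial vanishing on $Y$ also vanishes on $X$, so $I(Y) \subseteq I(X)$. The key observation is that this inclusion of ideals reverses the inclusion of the associated sum-of-squares cones: if a linear form $f$ is $k$-sos modulo $I(Y)$, say $f \equiv \sum_j g_j^2 \pmod{I(Y)}$ with $\deg g_j \le k$, then the same identity holds modulo $I(X)$ because $f - \sum_j g_j^2 \in I(Y) \subseteq I(X)$. Hence every linear form that is $k$-sos mod $I(Y)$ is also $k$-sos mod $I(X)$.

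From here I would unwind the definitions. By the remark in Section 2 (valid for real ideals, which vanishing ideals of real varieties are), a linear inequality $f(x) \ge 0$ is valid on $\THETABODY_k(I)$ if and only if $f$ is $k$-sos mod $I$; alternatively one can argue straight from the defining formula $\THETABODY_k(I) = \{x : f(x) \ge 0 \text{ for all linear } f \equiv k\text{-sos mod } I\}$. Either way, the set of linear forms certifying membership in $\THETABODY_k(I(X))$ contains the set certifying membership in $\THETABODY_k(I(Y))$. Cutting out a region with \emph{more} linear inequalities yields a \emph{smaller} set, so $\THETABODY_k(I(X)) \subseteq \THETABODY_k(I(Y))$.

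There is essentially no hard part here; the lemma is a soft formal consequence of the contravariance of the ideal construction. The only point requiring a moment's care is that both ideals be real — this is needed for the ``valid iff $k$-sos'' characterization, and it holds since $X$ and $Y$ are assumed to be real varieties. If one wants to avoid invoking that characterization altogether, one can simply note that the inclusion $\THETABODY_k(I(X)) \subseteq \THETABODY_k(I(Y))$ follows immediately from the definition: $\THETABODY_k$ is defined as an intersection of halfspaces $\{f \ge 0\}$ indexed by linear $k$-sos forms, and passing from $I(Y)$ to $I(X)$ only enlarges the index set. I would present the proof in this latter, self-contained form to keep it to two or three lines.

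One alternative I considered is to argue via the semidefinite (moment-matrix) description in the Proposition: a feasible $y$ for the larger variety's program restricts to a feasible $y$ for the smaller one, essentially because $V_{2k}(X) \subseteq V_{2k}(Y)$ and the reduced moment matrix $M_{V_k(X)}(y)$ is a principal submatrix of $M_{V_k(Y)}(y)$, so positive semidefiniteness is inherited and the projections nest correctly. This works but requires matching up index sets carefully, so I would relegate it at most to a remark and use the sos/halfspace argument as the main proof.
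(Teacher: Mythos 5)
Your proof is correct and takes essentially the same route as the paper: $X \subseteq Y$ gives $I(Y) \subseteq I(X)$, so every linear form that is $k$-sos mod $I(Y)$ is $k$-sos mod $I(X)$, and the inclusion of theta bodies follows directly from the definition. The extra remarks about real ideals and the moment-matrix alternative are fine but unnecessary; the two-line definitional argument you settle on is exactly what the paper does.
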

\begin{proof}
If $X \subseteq Y$, then the reverse inclusion holds for their ideals: $I(Y) \subseteq I(X)$. Any function which is $k$-sos mod $I(Y)$ is then also $k$-sos mod $I(X)$. The result follows from the definition of $\THETABODY_k(I)$.
\end{proof}
In particular, since $C_G \subseteq T_G$, we have $\THETABODY_k(I(C_G))
\subseteq \THETABODY_k(I(T_G))$ for all $k$. Note that $I(T_G) = I_G$ in our
notation from the $K_i$-free problem.

For the complete graph $K_n$, when $n$ is odd, the inequality
\begin{equation} \sum_{e \in E} x_e \le \frac{n^2-1}{4}\end{equation}
defines a facet of both $P_3(K_n)$ and $\textup{CUT}(K_n)$.
\begin{theorem}{\cite[Theorem 6]{moniquestuff}}
For $k < \frac{n-2}{4}$, $\textup{CUT}(K_n) \subsetneq
\THETABODY_{k}(I(C_{K_n}))$. In particular, equation (1) does not hold on $\THETABODY_{k}(I(C_{K_n}))$.
\end{theorem}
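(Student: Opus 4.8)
The plan is to exhibit, for each $k<\tfrac{n-2}{4}$, a point of $\THETABODY_k(I(C_{K_n}))$ that violates the facet inequality $\sum_{e\in E}x_e\le\tfrac{n^2-1}{4}$. By the sum-of-squares characterization of theta bodies recorded above (applicable since $I(C_{K_n})$ is a real ideal), this is equivalent to showing that the linear form $\tfrac{n^2-1}{4}-\sum_e x_e$ is \emph{not} $k$-sos modulo $I(C_{K_n})$. I would begin with a change of variables: writing a cut of $K_n$ as a sign vector $\sigma\in\{\pm1\}^n$ recording the two sides, so that $x_{ij}=\tfrac{1-\sigma_i\sigma_j}{2}$, one computes $\sum_e x_e=\tfrac{n^2}{4}-\tfrac14\bigl(\sum_i\sigma_i\bigr)^2$, so the facet inequality becomes exactly the statement $\bigl(\sum_i\sigma_i\bigr)^2\ge1$ on $\{\pm1\}^n$, which is true since $n$ is odd. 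Since each coordinate $z_{ij}:=\sigma_i\sigma_j$ is quadratic in $\sigma$, and conversely (using $\sigma_\ell^2\equiv1$) every squarefree $\sigma$-monomial of degree $\le2k$ can be written, after multiplying by a suitable $\sigma_\ell$, as a $z$-monomial of degree $\le k$ modulo $\langle\sigma_i^2-1\rangle$, a routine bookkeeping argument shows that the $z$-form $\tfrac{n^2-1}{4}-\sum_e x_e$ is $k$-sos modulo $I(C_{K_n})$ if and only if $\bigl(\sum_i\sigma_i\bigr)^2-1$ is $2k$-sos modulo $\langle\sigma_i^2-1:1\le i\le n\rangle$. So the theorem reduces to a Positivstellensatz-degree lower bound: $\bigl(\sum_i\sigma_i\bigr)^2-1$ is not $2k$-sos on the cube whenever $2k<\tfrac{n-2}{2}$.

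To prove that lower bound I would dualize and construct an explicit degree-$2k$ pseudo-expectation $\tilde{\mathbb E}$ on $\{\pm1\}^n$ — a functional on polynomials of degree $\le 2k$ with $\tilde{\mathbb E}[1]=1$, $\tilde{\mathbb E}[p^2]\ge0$ for $\deg p\le k$, respecting $\sigma_i^2=1$ — with $\tilde{\mathbb E}\bigl[(\sum_i\sigma_i)^2\bigr]=0<1$. The natural candidate is the functional that ``pretends to be'' the uniform distribution on $\{\sigma:\sum_i\sigma_i=0\}$: although that set is empty for $n$ odd, its symmetric moments $\mathbb E[\sigma^T]=m_{|T|}(n)$ are given by formulas rational in $n$ (e.g. $m_0=1$, $m_1=0$, $m_2=-\tfrac1{n-1}$, and so on), and one sets $\tilde{\mathbb E}[\sigma^T]:=m_{|T|}(n)$ for $|T|\le2k$. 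Its projection to the $z$-coordinates is the regular-simplex point $\bar x=\tfrac{n}{2(n-1)}\mathbf 1$, for which $\sum_e\bar x_e=\tfrac{n^2}{4}>\tfrac{n^2-1}{4}$, so this functional, if feasible, is exactly what we want. Feasibility means positive semidefiniteness of the moment matrix indexed by the subsets $T$ with $|T|\le k$; using $S_n$-invariance this matrix block-diagonalizes along the isotypic components of the permutation action, and each block can be written explicitly in terms of Hahn (or dual Hahn) orthogonal polynomials evaluated at integer points. The claim is that all blocks are positive semidefinite precisely in the range $2k<\tfrac{n-2}{2}$; intuitively, the even-$n$ moment formulas stay internally consistent — the discrete measure they implicitly encode remains effectively concentrated near $\sum_i\sigma_i=0$ — until the degree is large enough to detect that $n$ is odd, which happens only around degree $\tfrac n2$. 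Feeding such an $\tilde{\mathbb E}$ back through the reduction of the first paragraph produces a point of $\THETABODY_k(I(C_{K_n}))$ lying outside $\textup{CUT}(K_n)$ and violating (1).

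I expect the second paragraph to be the real obstacle. The reductions of the first paragraph are formal and the shape of the construction is essentially forced, but carrying out the positive-semidefiniteness check on the symmetry-reduced blocks with the \emph{sharp} cutoff — so that the threshold is exactly $2k<\tfrac{n-2}{2}$, equivalently $k<\tfrac{n-2}{4}$, rather than merely $k=\Omega(n)$ — is where the technical work concentrates; this is the point where I would lean on known identities for Hahn polynomials and on the analysis of symmetry-reduced semidefinite programs as developed by Laurent and coauthors.
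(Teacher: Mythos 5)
First, a point of orientation: the paper does not prove this statement at all --- it is imported from Laurent \cite{moniquestuff} (her Theorem 6), with the translation into theta-body language delegated to Example 3.9 of \cite{GLPT}. So there is no internal proof to compare against; what you have written is an attempted reconstruction of Laurent's own argument. As a reconstruction it is aimed in exactly the right direction: the passage to $\pm1$ variables via $x_{ij}=\tfrac{1-\sigma_i\sigma_j}{2}$, the identification of the facet (1) with $(\sum_i\sigma_i)^2\ge 1$, the candidate point $\bar x=\tfrac{n}{2(n-1)}\mathbf 1$, and the dual pseudo-moment functional built from the formally extended moments of the uniform distribution on balanced sign vectors are precisely the ingredients Laurent uses. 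The parity bookkeeping in your first paragraph (replacing an odd summand $g_j$ by the even polynomial $\sigma_\ell g_j$ of no larger even degree) is also sound, and since your candidate functional vanishes on odd monomials, the even/odd block split of the moment matrix causes no trouble.

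The problem is that you have deferred the entire mathematical content of the theorem. The claim that the $S_n$-symmetry-reduced moment blocks are positive semidefinite precisely when $2k<\tfrac{n-2}{2}$ \emph{is} the theorem; it does not follow from general facts about Hahn polynomials, nor from the heuristic that low-degree moments ``cannot detect that $n$ is odd.'' In Laurent's paper this step is a delicate explicit eigenvalue computation for the symmetrized blocks, occupying most of the proof, and nothing in your proposal substitutes for it --- you state the desired conclusion of that computation and flag it as the hard part. Until that positive semidefiniteness is actually established with the sharp cutoff (a weaker $k=\Omega(n)$ bound would not recover the stated threshold $k<\tfrac{n-2}{4}$), the proposal is a correct plan rather than a proof. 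If your goal is only to justify the statement as used in this paper, the honest route is the one the authors take: cite \cite{moniquestuff} and the translation in \cite{GLPT} rather than re-derive the bound.
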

In \cite{moniquestuff} this result appears in terms of a different but related relaxation.
A translation of that result
to the theorem above can be found in Example 3.9 of \cite{GLPT}. We can now prove Theorem 4.1.

\begin{proof}
By Theorem 4.3, there is a point $x \in \THETABODY_{k}(I(C_{K_n}))$ violating (1). But by Lemma 4.2, $x \in \THETABODY_{k}(I(T_{K_n}))$. Since (1) is valid on $P_3(K_n)$, $x \notin P_3(K_n)$.
\end{proof}

This implies that the theta body hierarchy does not polynomially capture the $K_n$ inequalities, as the size of the reduced moment matrices associated with the $\lceil\frac{n-2}{4}\rceil$-th theta body is exponential in $n$. It is still an open question, for both $\textup{CUT}(K_n)$ and $P_3(K_n)$, what is the
smallest $k$ so that the $k$-th theta body is exact.

Recall that for $i=2$, the $K_i$-free problem is simply the stable set problem. In that case it is well known that the clique inequalities
are valid for the first theta body relaxation. A simple byproduct of Theorem \ref{trianglecut} is that this fact fails to generalize even to $i=3$, as it is impossible to capture all the clique inequalities with a constant rank of theta bodies in this case.

\subsection{An integrality gap for triangle cover}
Let $G$ be a graph. A {\it triangle cover} is a collection of edges in $G$, containing at least one edge from every triangle in $G$. Let $\tau(G)$ be the minimum-size triangle cover in $G$ (in the language of the introduction, the $K_3$-cover problem with unit weights). Let $I$ be the ideal of the triangle cover problem. Define the following semidefinite relaxation:
$$\tau^\dagger(G) = \min \left\{\sum_{e \in E} x_e: x \in \THETABODY_2(I) \right\}.$$
Note that since $C$ is a triangle cover if and only if $E \setminus C$ is a triangle free set, we can restate any statements about theta bodies for the triangle free problem using the change of variables $x \mapsto 1-x$.

We can also define a natural LP relaxation for the triangle cover problem. Let
$$\tau^*(G) = \min \left\{\sum_{e \in E} x_e: x \in [0,1]^E \hbox{ and for all triangles $\Delta$,} \sum_{e \in \Delta} x_e \ge 1\right\}.$$
Krivelevich \cite{krivelevich} proved that $\tau(G) \le 2\tau^*(G)$. We can apply this to prove an integrality gap for $\tau^\dagger(G)$.
\begin{theorem}
For any graph $G$, $\tau^\dagger(G) \ge \frac{\tau(G)}{2}$.
\end{theorem}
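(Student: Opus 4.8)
The plan is to prove the two inequalities $\tau^\dagger(G) \ge \tau^*(G)$ and $\tau^*(G) \ge \tfrac12\tau(G)$ and chain them. The second is just Krivelevich's bound $\tau(G) \le 2\tau^*(G)$ rearranged. So the whole statement reduces to the geometric claim that $\THETABODY_2(I)$ is contained in the feasible polytope of the linear program defining $\tau^*(G)$, namely $\{x \in [0,1]^E : \sum_{e \in \Delta} x_e \ge 1 \text{ for every triangle } \Delta\}$; once that containment is established, minimizing $\sum_{e\in E} x_e$ over the smaller set $\THETABODY_2(I)$ gives a value at least $\tau^*(G)$.

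To prove the containment I would pass to the triangle free problem via the affine involution $\phi \colon \RR^E \to \RR^E$ given by $\phi(x)_e = 1 - x_e$. Since $C \subseteq E$ is a triangle cover precisely when $E \setminus C$ is triangle free, $\phi$ carries the variety of triangle covers bijectively onto the variety $T_G$ of triangle free sets, so $I = I(\phi(T_G))$. Because $\phi$ is affine and invertible it preserves polynomial degree, hence a linear $f$ is $2$-sos modulo $I$ if and only if $f \circ \phi$ is $2$-sos modulo $I_G = I(T_G)$; as $\THETABODY_2$ is defined purely by the requirement that all linear functions which are $2$-sos be nonnegative, this gives $\THETABODY_2(I) = \phi\bigl(\THETABODY_2(I_G)\bigr)$. (This affine covariance is also what legitimizes applying the Section 3 machinery, developed for the lower-comprehensive variety $T_G$, to the triangle cover side.)

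Now I would invoke Section 3 with $i = 3$. Corollary \ref{frac} says $x_a + x_b + x_c \le 2$ is valid on $\THETABODY_2(I_G)$ for every triangle $\{a,b,c\}$, and since $x_e \equiv x_e^2$ and $1 - x_e \equiv (1-x_e)^2$ modulo $I_G$, the inequalities $0 \le x_e \le 1$ are valid on $\THETABODY_2(I_G)$ as well; that is, $\THETABODY_2(I_G) \subseteq \textup{FRAC}_3(G)$. Applying $\phi$ and using $(1 - y_a) + (1 - y_b) + (1 - y_c) \le 2 \iff y_a + y_b + y_c \ge 1$, we obtain exactly $\THETABODY_2(I) = \phi\bigl(\THETABODY_2(I_G)\bigr) \subseteq \{y \in [0,1]^E : \sum_{e \in \Delta} y_e \ge 1 \text{ for all triangles } \Delta\}$. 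Therefore $\tau^\dagger(G) = \min_{x \in \THETABODY_2(I)} \sum_{e\in E} x_e \ge \tau^*(G) \ge \tfrac12\tau(G)$, which is the claim.

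Overall this is a short argument: the substantive input is Corollary \ref{frac}, and the rest is bookkeeping. The one step that warrants a little care — though I expect no genuine obstacle — is justifying $\THETABODY_2(I) = \phi(\THETABODY_2(I_G))$, i.e.\ that the theta body construction commutes with the invertible affine change of variables $x \mapsto 1-x$, and observing that this is precisely the translation principle mentioned earlier in the section.
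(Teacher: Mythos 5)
Your proposal is correct and follows essentially the same route as the paper: the paper's proof also chains $\tau^\dagger(G)\ge\tau^*(G)$ (via Corollary \ref{frac} with $i=3$ and the change of variables $x\mapsto 1-x$ noted just before the theorem) with Krivelevich's bound $\tau(G)\le 2\tau^*(G)$. You simply spell out the affine-covariance of the theta body under $x\mapsto 1-x$, which the paper leaves as a remark.
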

\begin{proof}
By Corollary 3.2, $\tau^\dagger(G) \ge \tau^*(G)$, as the inequalities defining $\tau^*(G)$ are valid on the second theta body. Combining this with Krivelevich's inequality gives the result.
\end{proof}

Another way to interpret Krivelevich's result is in terms of a conjecture of Tuza. Define a {\it triangle packing} in a graph $G$ to be a collection of triangles in $G$, no two of which share an edge. Let $v(G)$ be the maximum-size triangle packing in $G$. It is an easy exercise to check that $v(G) \le \tau(G) \le 3v(G)$. However, Tuza conjectured in \cite{tuza} that the stronger inequality $\tau(G) \le 2v(G)$ holds for all graphs $G$. The problem is currently open; see \cite{haxell} for more information. $v(G)$ also has a natural LP relaxation.
$$v^*(G) = \max \left\{\sum_{\Delta \in T} y_\Delta: y \in [0,1]^T \hbox{ and for all edges $e$,} \sum_{e \in \Delta} y_\Delta \le 1\right\}$$
By LP duality, $\tau^*(G) = v^*(G)$. Krivelevich \cite{krivelevich} also proved that  $v^*(G) \le 2v(G)$. After applying the duality $\tau^*(G) = v^*(G)$, these become fractional versions of Tuza's conjecture: $\tau(G) \le 2v^*(G)$ and $\tau^*(G) \le 2v(G)$. A natural question to ask is whether, given the SDP relaxation $\tau^\dagger(G)$, whether the ``semidefinite version'' of Tuza's conjecture would hold: $\tau^\dagger(G) \le 2v(G)$.

\bibliographystyle{plain}
\bibliography{KiCover}

\begin{thebibliography}{10}

\bibitem{conforti}
Michele Conforti, Derek Corneil, and Ali Mahjoub.
\newblock {$K_i$}-covers. {I}. {C}omplexity and polytopes.
\newblock {\em Discrete Math.}, 58(2):121--142, 1986.

\bibitem{goemans_williamson}
Michel Goemans and David Williamson.
\newblock Improved approximation algorithms for maximum cut and satisfiability
  problems using semidefinite programming.
\newblock {\em J. Assoc. Comput. Mach.}, 42(6):1115--1145, 1995.

\bibitem{GLPT}
Jo{\~a}o Gouveia, Monique Laurent, Pablo~A. Parrilo, and Rekha~R. Thomas.
\newblock A new semidefinite programming hierarchy for cycles in binary
  matroids and cuts in graphs.
\newblock {\em Math. Program.}, 133(1-2, Ser. A):203--225, 2012.

\bibitem{gpt}
Jo{\~a}o Gouveia, Pablo~A. Parrilo, and Rekha~R. Thomas.
\newblock Theta bodies for polynomial ideals.
\newblock {\em SIAM J. Optim.}, 20(4):2097--2118, 2010.

\bibitem{frg}
Jo{\~a}o Gouveia and Rekha~R. Thomas.
\newblock Spectrahedral approximations of convex hulls of algebraic sets.
\newblock In {\em Semidefinite Optimization and Convex Algebraic Geometry},
  volume~13 of {\em MOS-SIAM Series on Optimization}, pages 293--340. 2012.

\bibitem{haxell}
Penny Haxell, Alexandr Kostochka, and St{\'e}phan Thomass{\'e}.
\newblock A stability theorem on fractional covering of triangles by edges.
\newblock {\em European J. Combin.}, 33(5):799--806, 2012.

\bibitem{krivelevich}
Michael Krivelevich.
\newblock On a conjecture of {T}uza about packing and covering of triangles.
\newblock {\em Discrete Math.}, 142(1-3):281--286, 1995.

\bibitem{moniquestuff}
Monique Laurent.
\newblock Lower bound for the number of iterations in semidefinite hierarchies
  for the cut polytope.
\newblock {\em Math. Oper. Res.}, 28(4):871--883, 2003.

\bibitem{lovasz}
L{\'a}szl{\'o} Lov{\'a}sz.
\newblock On the {S}hannon capacity of a graph.
\newblock {\em IEEE Trans. Inform. Theory}, 25(1):1--7, 1979.

\bibitem{tuza}
Zsolt Tuza.
\newblock A conjecture on triangles of graphs.
\newblock {\em Graphs and Combinatorics}, pages 373--380, 1990.

\end{thebibliography}
\end{document}